\newcommand{\Dom}{\mathrm{Dom}}
\newcommand{\bbR}{{\mathbb R}}
\def\bbR{{\mathbb R}}
\newcommand{\N}{\mathbb{N}}
\newcommand{\R}{\mathbb{R}}
\def\e{\mathrm{e}}
\numberwithin{equation}{section}
\DeclareFontFamily{U}{mathx}{\hyphenchar\font45}
\DeclareFontShape{U}{mathx}{m}{n}{
      <5> <6> <7> <8> <9> <10>
      <10.95> <12> <14.4> <17.28> <20.74> <24.88>
      mathx10
      }{}
\DeclareSymbolFont{mathx}{U}{mathx}{m}{n}
\DeclareMathAccent{\widecheck}{0}{mathx}{"71}
\DeclareMathAccent{\wideparen}{0}{mathx}{"75}
\newcommand{\leqnomode}{\tagsleft@true}
\newcommand{\reqnomode}{\tagsleft@false}
\newcommand{\dd}{\mathrm{d}}
\def\CB{\color{black} }
\theoremstyle{theorem}
\newtheorem{theorem}{\sc \textbf{Theorem}}[section]  
\newtheorem{proposition}[theorem]{\sc \textbf{Proposition}}   
\newtheorem{corollary}[theorem]{\sc \textbf{Corollary}}        
\newtheorem{lemma}[theorem]{\sc \textbf{Lemma}}
\theoremstyle{remark}
\begin{document}

\title[Poincaré inequalities on Lie groups]{Local and nonlocal Poincaré inequalities\\ on Lie groups} 

 \author[T.\ Bruno]{Tommaso Bruno}
\address{Department of Mathematics: Analysis, Logic and Discrete Mathematics, Ghent University,
Krijgslaan 281, 9000 Ghent, Belgium}
\email{tommaso.bruno@ugent.be}

\author[M.\ M.\ Peloso]{Marco M.\ Peloso}
\address{Dipartimento di Matematica, 
Universit\`a degli Studi di Milano, 
Via C.\ Saldini 50,  
20133 Milano, Italy}
\email{marco.peloso@unimi.it}

\author[M.\ Vallarino]{Maria Vallarino}
\address{Dipartimento di Scienze Matematiche ``Giuseppe Luigi Lagrange'',
  Politecnico di Torino, Corso Duca degli Abruzzi 24, 10129 Torino,
  Italy - Dipartimento di Eccellenza 2018-2022}
\email{maria.vallarino@polito.it}

\keywords{Lie groups, Poincar\'e inequality}

\thanks{{\em Math Subject Classification} 26D10, 43A80}

\thanks{All authors are  partially supported by the GNAMPA 2020 project ``Fractional Laplacians and subLaplacians on Lie groups and trees" and are members of the Gruppo Nazionale per l'Analisi
  Matematica, la Probabilit\`a e le loro Applicazioni (GNAMPA) of the
  Istituto Nazionale di Alta Matematica (INdAM). T. Bruno acknowledges
  support by the Research Foundation--Flanders (FWO) through the
  postdoctoral grant 12ZW120N}

\begin{abstract} We prove a local $L^p$-Poincar\'e inequality, $1\leq p < \infty$, on noncompact Lie groups endowed with a sub-Riemannian structure. We show that the constant involved grows at most exponentially with respect to the radius of the ball, and that if the group is nondoubling, then its growth is indeed, in general, exponential. We also prove a nonlocal $L^2$-Poincar\'e inequality with respect to suitable finite measures on the group.

\end{abstract}

\maketitle

\section{Introduction}
The aim of this paper is to establish two forms of Poincaré inequality on noncompact connected Lie groups. On the one hand, we shall obtain the Lie group analogue of the classical inequality on $\R^d$
\begin{equation}\label{PoincareRd}
\| f -f_B\|_{L^p(B)} \leq C r \|\nabla f\|_{L^p(B)},
\end{equation}
where $p\in [1,\infty)$, $f\in C^\infty(\R^d)$, $B$ is a ball and $f_B$ is the average of $f$ on $B$. On the other hand, we shall consider a nonlocal $L^2$-version of such inequality, which takes the form
\begin{equation}\label{globPoincareRd}
\|f \|_{L^2(\R^d\!,\,\mu)} \leq C \|\nabla f\|_{L^2(\R^d\!,\,\mu)}
\end{equation}
for certain finite measures $\mu$ which are absolutely continuous with respect to the Lebesgue measure and whose densities satisfy a certain decay condition at infinity. One should think e.g.\ to the case when $\mu$ is a Gaussian measure.

Extensions of the classical Poincaré inequality~\eqref{PoincareRd} to non-Euclidean settings have been widely studied in the last decades. A thorough overview of the literature would go out of the scope of the present paper, so we refer the reader to the milestone~\cite{HK} and the references therein. For what concerns Lie groups, a Poincaré inequality on unimodular groups can be obtained by combining~\cite[\S 8.3]{SC} and~\cite[Theorem 9.7]{HK}. In this paper we prove that a Poincaré inequality holds also on nonunimodular Lie groups endowed with a relatively invariant measure, and we also describe the behaviour of the Poincaré constant in a quantitative way. We show that this grows at most exponentially with respect to the radius of the ball, and that if the group is nondoubling then such growth is, in general, exponential. More precisely, in a class of Lie groups including the real hyperbolic spaces as a subclass, we estimate from below the constant involved in the Poincar\'e inequality with a quantity which grows exponentially with respect to the radius of the ball. 

Nonlocal inequalities such as~\eqref{globPoincareRd} have been introduced more recently, and the decay of the densities involved is measured in terms of the Euclidean Laplacian $\Delta$. After its establishment on $\R^d$ in~\cite{BBCG}, they were extended to unimodular Lie groups of polynomial growth in~\cite{RussSire}, where a sum of squares subelliptic sub-Laplacian plays the role of $\Delta$. In this paper, we extend their method to the nondoubling regime, where the sub-Laplacian in addition has a drift term.

As a classical application of the local Poincar\'e inequality, we show the so-called local parabolic Harnack principle for the sub-Laplacian with drift. Another application of our inequality is given in~\cite{BC}  to the study of spectral properties of Schr\"odinger operators on Lie groups.

\section{Setting and Preliminaries}
Let $G$  be a noncompact connected Lie group with identity $e$. We
denote by $\rho$ a right Haar measure, by $\chi$ a continuous positive
character of $G$, and by $\mu_\chi$ the measure with density $\chi$
with respect to $\rho$. As the modular function on $G$, which we
denote by $\delta$, is such a character, $\mu_\delta$ is a left Haar
measure on $G$. We denote it by $\lambda$. Observe also that
$\mu_1=\rho$. 

Let $\mathbf{X}= \{ X_1,\dots, X_\ell\}$ be a family of left-invariant
linearly independent vector fields which satisfy H\"ormander's
condition. Let $d_C(\, \cdot\, ,\,  \cdot\, )$ be its associated
left-invariant Carnot--Carathéodory distance. We let $|x|=d_C(x,e)$,
and denote by $B_r$ the ball centred at $e$ of radius $r$. The measure
of $B_r$ with respect to $\rho$ will be denoted
by $V(r)=\rho(B_r)$; we recall that  $V(r)= \lambda(B_r)$. It is well known, cf.~\cite{Guiv,Varopoulos1}, that there exist $d\in
\N^*$ depending on $G$ and $\bf{X}$, and $C>0$, such that
\begin{equation}\label{pallepiccole}
C^{-1} r^d \leq V(r) \leq C r^d\qquad \forall r\in (0,1]\,, \\
\end{equation}
and $D_0,D>0$ depending only on $G$,
such that, for $r\geq 1$, either $C^{-1} r^D \le V(r)\le C r^D$   or 
\begin{equation}\label{pallegrandi}
C^{-1}\e^{D_0 r} \leq V(r)\leq C \e^{Dr}.
\end{equation}
In the former case, the group $G$ is said to be of polynomial
growth, while in the latter case of exponential
growth.  \CB

For any character $\chi$, one has (see~\cite{HMM})
\begin{equation}\label{supinf}
\sup_{B_r} \chi = \e^{c(\chi) r},  \qquad
\mbox{where} \quad   c(\chi) = \bigg(\sum_{j=1}^\ell |X_j\chi(e)|^2\bigg)^{1/2}.
\end{equation}

Given a ball $B$  with respect to $d_C$, we denote by $c_B$ its center and by $r_B$ its radius, and we write $B=B(c_B,r_B)$; we also set $2B=B(c_B,2r_B)$. Moreover, for $R>0$ let $\mathcal B_R$ be the family of
all balls of radius $\leq R$ and  
\begin{equation}\label{LDR}
D(R,\chi)=
 \sup_{B\in\mathcal B_R} \frac{\mu_\chi(2B)}{\mu_\chi(B)}
= \sup_{0<r\leq R}\frac{\mu_\chi(B_{2r})}{\mu_\chi(B_r)}, 
\end{equation}
where the latter equality holds since \CB  $\mu_\chi(B(c_B,r)) = (\chi
\delta^{-1})(c_B) \mu_\chi(B_r)$ for all  $r>0$ and $c_B\in G$.
By~\eqref{pallepiccole},~\eqref{pallegrandi} and~\eqref{supinf} there is $C>0$ such that either $C^{-1} \leq D(R,\chi)\leq C$, or
\[
D(R,\chi) \leq C \e^{(2D
  - D_0 + 3c(\chi))R},
\]
for all $R>0$. Actually, $(G, d_C, \mu_\chi)$ is doubling if and only if $\chi=1$ and $(G, d_C, \rho)$ is doubling, in particular $G$ is of polynomial growth. Indeed, if $\chi \neq 1$ then $\mu_\chi(B_r)$ grows exponentially with $r$. To show this, notice that if $\chi \neq 1$, then there is $x\in G$ with $\chi(x)>1$; let $N$ be the lowest integer such that $N\geq |x|$, and notice that $B_N x^n\subseteq B_{(n+1)N}$. If $r>N$ and $n$ is the largest integer such that $(n+1)N\leq [r]$, then
\[
\mu_\chi (B_r) \geq \mu_\chi ( B_N x^{n}) = \chi(x)^{n} \mu_\chi(B_N)\geq  \chi(x)^{[r]/N-2} \mu_\chi(B_N),
\]
whence the conclusion. Hence, the metric measure space $(G, d_C, \mu_\chi)$ is locally
doubling, but not doubling in general. In this setting we studied
various function spaces~\cite{BPTV,BPV1,BPV2} and the Sobolev and
Moser--Trudinger inequalities~\cite{BPV3}, and we refer the reader to these references for more details on these matters.

\section{The local Poincaré inequality on Lie groups}\label{seclocal}
In this section we prove the $L^p$-Poincar\'e inequality for smooth functions on $(G, d_C, \mu_\chi)$.
Given a ball $B$ and $f\in C^\infty(G)$, we  denote by
$f_B^\chi$ its average over $B$ with respect to $\mu_\chi$,
\[
f_B^\chi = \frac{1}{\mu_\chi(B)}\int_Bf \, \dd \mu_\chi,
\]
and we let $|\nabla f|^2 = \sum_{j=1}^\ell (X_j f)^2$. If $S$ is a set of variables, we denote by $C(S)$ a constant depending only on the elements of $S$.

\begin{theorem}\label{teo:LPBB}
There exist a constant $C=C(G,\mathbf{X})>0$ and a universal constant $\alpha>0$ such that, for all $p\in [1,\infty)$, $R>0$, all balls $B$ of radius $r\in (0,R]$ and $f\in C^\infty(G)$,
\begin{equation}\label{RLP}
\| f - f_B^\chi\|_{L^p(B,\mu_\chi)} \leq C\,e^{ \frac{1}{p} [2c(\chi) + c(\chi \delta^{-1})] R}\, D(R, \chi)^{\alpha}  \, r \,\||\nabla  f|\|_{L^p(B,\mu_\chi)}.
\end{equation}
\end{theorem}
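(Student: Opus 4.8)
The plan is to reduce the weighted Poincaré inequality on $(G,d_C,\mu_\chi)$ to the unweighted case with respect to the right Haar measure $\rho$, and then to exploit the known Poincaré inequality on $(G,d_C,\rho)$ that follows from the unimodular theory (via \cite{SC} and \cite[Theorem 9.7]{HK}), being careful to track all constants. First I would record that, by the explicit formula \eqref{supinf} and the estimate of $\sup_{B_r}\chi$, the density $\chi$ oscillates on a ball $B=B(c_B,r_B)$ by a factor of at most $e^{2c(\chi)r_B}$ around its value at $c_B$: more precisely, for $x\in B$, $\chi(c_B)e^{-c(\chi)r_B}\le \chi(x)\le \chi(c_B)e^{c(\chi)r_B}$, since $d_C$ is left-invariant and $\chi(c_B^{-1}x)\in[e^{-c(\chi)r_B},e^{c(\chi)r_B}]$. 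Consequently $d\mu_\chi$ and $\chi(c_B)\,d\rho$ are comparable on $B$ with constant $e^{c(\chi)r_B}\le e^{c(\chi)R}$ in each direction, and the same holds for the $L^p$ norms raised to the $p$-th power, giving a factor $e^{c(\chi)R/p}$ after taking $p$-th roots. One subtlety here is that replacing the $\mu_\chi$-average $f_B^\chi$ by the $\rho$-average $f_B^\rho$ costs only a constant: for any constant $c$, $\|f-f_B^\chi\|_{L^p(B,\mu_\chi)}\le 2\|f-c\|_{L^p(B,\mu_\chi)}$, so we may freely work with whichever average is convenient.

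The core step is then the unweighted inequality on $(G,d_C,\rho)$. I would invoke that $(G,d_C,\rho)$ is locally doubling (as explained in the excerpt: it is doubling iff $G$ has polynomial growth, and in general the local doubling constant on balls of radius $\le R$ is controlled by $D(R,1)$) and that it supports a weak (or strong) $L^1$-Poincaré inequality at the level of balls; combining a Poincaré inequality with a doubling-type hypothesis via the Boman-chain / Jerison argument, or directly quoting \cite[Theorem 9.7]{HK} together with \cite[\S 8.3]{SC}, yields
\[
\|f-f_B^\rho\|_{L^p(B,\rho)}\le C\,D(R,1)^{\beta}\, r\,\||\nabla f|\|_{L^p(B,\rho)}
\]
for balls of radius $r\le R$, with $C=C(G,\mathbf X)$ and a universal exponent $\beta$. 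Here I must be slightly careful that the standard statements are often local (radius $\le 1$) or assume global doubling; upgrading to all radii $\le R$ is where the $D(R,1)$ factor enters, via the self-improvement of Poincaré inequalities on chains of balls. Note $D(R,1)\le D(R,\chi)$ up to the elementary comparison of $\mu_\chi$ and $\rho$ on nested balls, so this contributes to the $D(R,\chi)^\alpha$ term.

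Finally I would assemble the estimate. Starting from $\|f-f_B^\chi\|_{L^p(B,\mu_\chi)}$, pass to $\chi(c_B)\|f-f_B^\rho\|_{L^p(B,\rho)}$ at a cost of $e^{c(\chi)R/p}$ (and a harmless factor for changing the average), apply the unweighted Poincaré inequality to get $C\,D(R,1)^\beta\, r\,\chi(c_B)\||\nabla f|\|_{L^p(B,\rho)}$, and then pass back to $\mu_\chi$ on the gradient side at a further cost of $e^{c(\chi)R/p}$. This produces a total exponential factor $e^{2c(\chi)R/p}$. The extra term $e^{c(\chi\delta^{-1})R/p}$ arises because the natural doubling quantity one really controls is $D(R,\chi)$, not $D(R,1)$: when one estimates ratios $\mu_\chi(B')/\mu_\chi(B)$ for nested balls used in the chaining, the identity $\mu_\chi(B(c_B,r))=(\chi\delta^{-1})(c_B)\,\mu_\chi(B_r)$ from \eqref{LDR} means translating a ball introduces a factor $(\chi\delta^{-1})(c_B)$, which over a chain of balls covering $B$ accumulates to $e^{c(\chi\delta^{-1})R}$; absorbing this and taking $p$-th roots yields the claimed $e^{\frac1p[2c(\chi)+c(\chi\delta^{-1})]R}$ together with $D(R,\chi)^\alpha$ for a universal $\alpha$. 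The main obstacle I anticipate is precisely this bookkeeping in the chaining argument: making sure that the passage from a ball-wise Poincaré inequality on $(G,d_C,\rho)$ to the global-radius statement on $(G,d_C,\mu_\chi)$ genuinely produces only the stated powers of $e^{c(\chi)R}$, $e^{c(\chi\delta^{-1})R}$ and $D(R,\chi)$, rather than a worse dependence, and verifying that $\alpha$ can be chosen independent of $G$, $\mathbf X$, $p$ and $\chi$.
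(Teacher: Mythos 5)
Your comparison step is fine: on a ball $B=B(c_B,r_B)$ one indeed has $\chi(c_B)e^{-c(\chi)r_B}\le\chi\le\chi(c_B)e^{c(\chi)r_B}$ by \eqref{supinf} and left-invariance of $d_C$, so $\mu_\chi$ and $\chi(c_B)\rho$ are comparable on $B$ at cost $e^{c(\chi)R}$, and the chaining philosophy (Jerison / \cite[Theorem 9.7]{HK}) matches the second half of the paper's argument. The genuine gap is in what you call the core step: you \emph{invoke} that $(G,d_C,\rho)$ supports a ball-level weak Poincar\'e inequality with constant $C\,D(R,1)^\beta r$, citing \cite[\S 8.3]{SC} and \cite[Theorem 9.7]{HK}. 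Those results cover unimodular groups (and, in the case of \cite{HK}, globally doubling measures); when $G$ is nonunimodular --- which is precisely the case this theorem is extending to --- $\rho$ is only right-invariant while $d_C$ is left-invariant, so $\rho(B(c_B,r))=\delta^{-1}(c_B)V(r)$ depends on the center and the unimodular theory does not apply to $(G,d_C,\rho)$. Moreover \cite[Theorem 9.7]{HK} is the self-improvement/chaining device: it requires the ball-level weak inequality as input, it does not produce it. That missing input is exactly the paper's inequality \eqref{LP}, which is proved directly: write $f(x)-f(xz)$ as an integral of $|\nabla f|$ along a sub-unit path $\gamma_z$, apply H\"older and Fubini, and change variables $y=xz$, $\zeta=x\gamma_z(s)$, tracking the Jacobian factor $(\chi\delta^{-1})(x)$ and the oscillation of $\chi$. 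Note that even in your baseline case $\chi=1$ this argument yields an extra factor $e^{c(\delta^{-1})r}$, so the unweighted inequality you quote, with only a $D(R,1)^\beta$ loss, is not available in the cited literature and is itself part of what has to be proved.

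A secondary problem is the bookkeeping you yourself flag. Even granting your core step, replacing $D(R,1)$ by $D(R,\chi)$ costs additional factors of the form $e^{Cc(\chi)R}$ (the measures $\mu_\chi(B_s)$ and $V(s)$ only compare up to $e^{c(\chi)s}$), and these get raised to the chaining exponent; likewise your explanation that the $e^{c(\chi\delta^{-1})R/p}$ term comes from translating balls along the chain does not reflect where it actually arises (it already appears in the single-ball inequality \eqref{LP}, from bounding $(\chi\delta^{-1})(x)$ on $B$). So your route would not obviously deliver the specific constant $e^{\frac1p[2c(\chi)+c(\chi\delta^{-1})]R}D(R,\chi)^\alpha$ with a universal $\alpha$, and the quantitative form of the constant is the point of the statement (it feeds the sharpness discussion in Section~\ref{sec:optimality}). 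The repair is to prove the weak $2B$ inequality directly for $\mu_\chi$, as in \eqref{LP}, and only then run the chaining argument.
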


Notice that the Poincaré constant grows at most exponentially with respect to the radius of the ball. The exponential term cannot, in general, be removed. After establishing the theorem, indeed, we show that when $G$ is the so called ``$ax+b$'' group and $\mu_\chi=\lambda$ is a left Haar measure,  the growth of the constant is indeed exponential.

\begin{proof}
Let  $p\in [1,\infty)$ be given. We shall prove that for every ball $B$ of radius $r >0$ and $f\in C^\infty(G)$
\begin{equation}\label{LP}
\int_B |f - f_B^\chi|^p\, \dd \mu_\chi  \leq 2^p\, \e^{c(\chi\delta^{-1})r}  \e^{2c(\chi) r} \, \frac{\mu_\chi(B_{2r})}{\mu_\chi(B_r)} \,r^p \,  \int_{2B}   |\nabla  f|^p \, \dd \mu_\chi.
\end{equation}
Once~\eqref{LP} is at disposal, the Poincaré inequality can be obtained by classical arguments, see e.g.~\cite[Theorem 9.7]{HK}. A careful inspection of~\cite[Section 5]{Jerison}, in particular, shows how a Whitney decomposition of $B$ brings to the constant given in the statement. We omit the details, which would be tedious and an almost verbatim repetition of the arguments that the reader can find in~\cite{Jerison}.

We then show~\eqref{LP}. For $z\in G$, let $\gamma_z \colon [0,1] \to G$ be such that $\gamma_z(0)=e$, $\gamma_z({|z|})=z$, $\gamma_z(s)\in B_{|z|}$ for every $s\in [0,|z|]$.

Let $B$ be a ball of radius $r>0$. Observe that if $x,y \in B$, and $z= x^{-1}y$, then $|z|<2r$. Moreover, if $\zeta = x \gamma_z(s)$, then $\zeta \in 2B$. For every $x,z\in G$, by H\"older's inequality
\begin{equation}\label{pointwisedifference}
|f(x) - f(xz)|^p\leq \bigg(\int_0^{|z|} |\nabla  f(x\gamma_z(s))|\, \dd s\bigg)^p \leq |z|^{p-1}\int_0^{|z|} |\nabla  f(x\gamma_z(s))|^p\, \dd s.
\end{equation}
We then have
\begin{align*}
\int_B |f - f_B^\chi|^p\, \dd \mu_\chi 
& = \int_B \left| \frac{1}{\mu_\chi(B)} \int_B \left( f(x)-f(y)\right)\, \dd \mu_\chi(y)\right|^p \, \dd \mu_\chi(x)\\
& \leq  \frac{1}{\mu_\chi(B)} \int_B \int_B \left| f(x)-f(y)\right|^p\, \dd \mu_\chi(y) \, \dd \mu_\chi(x),
\end{align*}
and after the change of variables $y=xz$, we get 
\begin{align*}
 \int_B |f - f_B^\chi|^p\, \dd \mu_\chi   \leq  \frac{1}{\mu_\chi(B)} \int_G \int_G \mathbf{1}_B(x)\mathbf{1}_B(xz)\left| f(x)-f(xz)\right|^p (\chi\delta^{-1})(x)\, \dd \mu_\chi(x)\, \dd \mu_\chi(z).
\end{align*}
Observe now that by~\eqref{pointwisedifference} and Fubini's theorem, we get
\begin{align*}
&\int_G \mathbf{1}_B(x)\mathbf{1}_B(xz)\left| f(x)-f(xz)\right|^p (\chi\delta^{-1})(x)\, \dd \mu_\chi(x)\\
& \qquad \leq  \frac{1}{\mu_\chi(B)} |z|^{p-1}  \int_0^{|z|}  \int_G \mathbf{1}_B(x)\mathbf{1}_B(xz)|\nabla  f(x\gamma_z(s))|^p\, (\chi\delta^{-1})(x)\,\dd \mu_\chi(x)\, \dd s .
\end{align*}
We make the change of variables $\zeta = x\gamma_z(s)$, and observe that if $x\in B$, then $(\chi\delta^{-1})(x) \leq (\chi\delta^{-1})(c_B) \, \sup_{B_r }(\chi\delta^{-1})$, and that $\chi(\gamma_z(s)) \leq \e^{2 c(\chi)r}$ by~\eqref{supinf}. Thus
\begin{align*}
&\int_G \mathbf{1}_B(x)\mathbf{1}_B(xz)  (\chi\delta^{-1})(x) |\nabla  f(x\gamma_z(s))|^p\, \dd \mu_\chi(x)\\
& \hspace{2cm} \leq \e^{2 c(\chi)r} (\chi\delta^{-1})(c_B) \, \sup_{B_r }(\chi\delta^{-1}) \, \mathbf{1}_{B_{2r}}(z) \int_{2B}   |\nabla  f(\zeta)|^p\, \dd \mu_\chi(\zeta).
\end{align*}
Therefore
\begin{align*}
\int_B |f - f_B^\chi|^p\, \dd \mu_\chi 
& \leq \e^{2 c(\chi)r} \e^{c(\chi\delta^{-1})r}  (2r)^p \, \frac{(\chi\delta^{-1})(c_B)}{\mu_\chi(B)}   \mu_\chi(B_{2r}) \int_{2B}   |\nabla  f(\zeta)|^p \, \dd \mu_\chi(\zeta),
\end{align*}
and~\eqref{LP} follows by \eqref{supinf}.
\end{proof}

As a corollary, we obtain the so-called local 
parabolic Harnack principle. We introduce the operator
\begin{equation}\label{Deltachi}
\Delta_{\chi} =-\sum_{j=1}^{\ell}(X_j^2 +(X_j\chi)(e)X_j ),
\end{equation}
which is essentially self-adjoint on $L^2(\mu_\chi)$ and
non-negative; see e.g.~\cite{HMM, BPTV}. We say that $\Delta_\chi$ satisfies the local parabolic Harnack
principle up to distance $R>0$ if there is $C(R)>0$ such that, for all $x\in G$, $r\in(0,R]$,
$s\in\bbR$, and any positive solutions $u$ of $(\partial_t +\Delta_\chi)u=0$ on
$(s,s+r^2)\times B(x,r)$, we have that
\begin{equation}\label{eq:PHP}
\sup_{Q_-} u \le C(R) \inf_{Q_+} u
\end{equation}
where 
\[
Q_- = \big( s+r^2/6, s + r^2/3\big) \times B(x,r/2),\qquad
Q_+ = \big( s+2r^2/3, s+r^2\big) \times B(x,r/2) .
\]

The following result follows at once from Theorem~\ref{teo:LPBB} and~\cite[Theorem 2.1]{SC}.
\begin{corollary}
For every $R>0$, $\Delta_\chi$ satisfies the local parabolic
  Harnack principle~\eqref{eq:PHP}. Furthermore, the positive 
  $\Delta_\chi$-harmonic functions satisfy the classical Harnack principle.
\end{corollary}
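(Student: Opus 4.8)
The plan is to read off both assertions from the standard equivalence between the parabolic Harnack principle and the conjunction of volume doubling and the $L^2$-Poincaré inequality, localised to a fixed scale, as packaged in~\cite[Theorem 2.1]{SC}. First I would fix the Dirichlet-space framework: the symmetric form $\cE(f,g)=\int_G\sum_{j=1}^\ell (X_jf)(X_jg)\,\dd\mu_\chi$, with core $C_c^\infty(G)$, is a regular, strongly local Dirichlet form on $L^2(\mu_\chi)$ whose associated nonnegative self-adjoint operator is exactly $\Delta_\chi$ from~\eqref{Deltachi}; this is essentially the content of the essential self-adjointness recalled right after~\eqref{Deltachi} (see~\cite{HMM,BPTV}), the point being that, since $\chi$ is a character, $X_j\chi=(X_j\chi)(e)\,\chi$ so that the $L^2(\mu_\chi)$-adjoint of $X_j$ is $-X_j-(X_j\chi)(e)$. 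Because $\mathbf{X}$ satisfies Hörmander's condition, the energy measure of $\cE$ is $|\nabla u|^2\,\dd\mu_\chi$, hence its intrinsic distance is precisely $d_C$; moreover $(G,d_C)$ is a complete, locally compact length space, so closed $d_C$-balls are compact. Thus $(G,d_C,\mu_\chi,\cE)$ is a structure to which~\cite[Theorem 2.1]{SC} applies.

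Next I would verify the two local hypotheses at an arbitrary fixed scale $R>0$. Local volume doubling holds because $D(R,\chi)<\infty$ for every $R>0$ --- this was observed in Section~2 as a consequence of~\eqref{pallepiccole},~\eqref{pallegrandi} and~\eqref{supinf} --- and, by the identity in~\eqref{LDR}, this bounds $\mu_\chi(2B)/\mu_\chi(B)$ uniformly over all balls $B$ of radius at most $R$. The local $L^2$-Poincaré inequality is Theorem~\ref{teo:LPBB} specialised to $p=2$: for every ball $B$ of radius $r\in(0,R]$ and every $f\in C^\infty(G)$ one has $\|f-f_B^\chi\|_{L^2(B,\mu_\chi)}\le C(R)\,r\,\||\nabla f|\|_{L^2(B,\mu_\chi)}$, which in particular yields the weak, $2B$-enlarged, Poincaré inequality required by~\cite[Theorem 2.1]{SC}. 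Feeding these two facts into that theorem gives the local parabolic Harnack principle~\eqref{eq:PHP} up to distance $R$, with a constant $C(R)$ depending on $R$ only through $D(R,\chi)$, $c(\chi)$ and $c(\chi\delta^{-1})$.

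For the elliptic assertion, I would observe that a positive $\Delta_\chi$-harmonic function $u$ on an open set $\Omega\subseteq G$ defines, through $(t,x)\mapsto u(x)$, a positive solution of $(\partial_t+\Delta_\chi)u=0$ on every cylinder $(s,s+r^2)\times B(x,r)$ with $\ov{B(x,r)}\subset\Omega$; applying~\eqref{eq:PHP} on such a cylinder gives $\sup_{B(x,r/2)}u\le C(R)\inf_{B(x,r/2)}u$ whenever $r\le R$. A standard Harnack chain argument, using that $\Omega$ is connected and that $(G,d_C)$ is a locally compact length space, then promotes this to the classical Harnack inequality $\sup_K u\le C(K,\Omega)\inf_K u$ for every compact $K\subset\Omega$. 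The only genuine point to check --- which is why the corollary ``follows at once'' --- is that the localised form of~\cite[Theorem 2.1]{SC} really demands the doubling and Poincaré hypotheses only up to the same scale $R$ at which the Harnack principle is asserted, together with the identification of the intrinsic metric of $\cE$ with $d_C$; both are by now classical.
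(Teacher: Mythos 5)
Your proposal is correct and follows exactly the route of the paper, which simply observes that the corollary follows from Theorem~\ref{teo:LPBB} (with $p=2$), the local doubling property $D(R,\chi)<\infty$, and~\cite[Theorem 2.1]{SC}; you have merely spelled out the Dirichlet-form identification of $\Delta_\chi$ and the standard passage from the parabolic to the elliptic Harnack principle, which the paper leaves implicit.
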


\subsection{Exponential growth of the constant}\label{sec:optimality}
We shall now show that if a Poincaré inequality 
\begin{equation}\label{Crp}
\int_{B_r} |f - f_B^\chi|^p\, \dd \mu_\chi  \leq C(r, p) \int_{B_r}   |\nabla  f|^p \, \dd \mu_\chi
\end{equation}
holds on general Lie groups for some $p$, then $C(r,p)$ grows exponentially with respect to the radius $r$. We show this on the $ax+b$ group of arbitrary dimension. {For notational convenience, we shall write $A \lesssim B$ to indicate that there is a constant $C$ such that $A \leq CB$. If $A\lesssim B$ and $B\lesssim A$, then we write $A\approx B$}.  

Let $G=\R^{n-1} \rtimes \R^+$ and let $(x,a)$ be its generic element. Recall that 
\[
\dd\lambda(x,a) = \frac{\dd x\, \dd a}{a^n} \qquad{\rm{and}}\qquad  \dd\rho(x,a) = \frac{\dd x\, \dd a}{a},
\]
since $\delta(x,a) = a^{-n+1}$; all positive characters of $G$ are of the form $ \chi_\gamma (x,a) = a^\gamma$ for some $\gamma \in \R$. We shall write $\mu_\gamma$ for the measure $\mu_{\chi_\gamma}$. {In particular,} $\lambda = \mu_{1-n}$ is the hyperbolic measure. We consider the left-invariant vector fields $X_i=a\partial_i$, $i=1,\dots,n-1$, and $X_0=a\partial_a$ which form a basis of the Lie algebra of $G$. The distance induced by such vector fields is the hyperbolic metric which is given by 
$$\cosh |(x,a)| = \frac{1}{2}(a+a^{-1} + a^{-1}|x|^2),$$ 
where $|x|$ is the Euclidean norm of $x\in \R^{n-1}$ (see \cite[(2.18)]{ADY}, \cite[(1.1)]{SV}). Then
\[
B_r = \left\{(x,a)\colon \e^{-r}<a<\e^r, \; |x|^2 < 2a(\cosh r - \cosh \log a)\right\}.
\]
In the case of the real hyperbolic space, i.e. the $ax+b$ group endowed with the measure $\lambda$ and the metric defined above, the constant $C(r,p)$ in \eqref{Crp} was estimated from above in \cite[Section 10.1]{HK}. We now estimate such constant from below. 

\smallskip

Consider the function $\phi \colon G\to \R$ defined by 
\[
\phi(x,a) = x_1,\qquad (x,a)\in G.
\]
Observe that $\int_{B_r} \phi \, \dd \mu_\gamma =0$ for all $\gamma\in\mathbb R$ and $|\nabla \phi (x,a) |^p = a^p$. Moreover,  
\[
\int_{B_r} |\phi|^p \, \dd \mu_\gamma\approx \int_{\e^{-r}}^{\e^r} a^{\gamma-1 + \frac{p+n-1}{2}} (\cosh r - \cosh \log a)^{\frac{p+n-1}{2}} {\dd a},
\]
while
\[
\int_{B_r} |\nabla \phi|^p \, \dd \mu_\gamma  \approx \int_{\e^{-r}}^{\e^r} a^{\gamma -1 +p+ \frac{n-1}{2}}{(\cosh r - \cosh \log a)^{\frac{n-1}{2}}} \, \dd a .
\]

\begin{lemma}
Let $\delta \in \R$ and $\epsilon >0$. Then
\[
\int_{\e^{-r}}^{\e^r} a^{\delta}{(\cosh r - \cosh \log a)^{\epsilon}} \, \dd a \approx \e^{r(|\delta +1| + \epsilon) }.
\]
\end{lemma}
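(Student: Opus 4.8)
The plan is to estimate the integral
\[
I(\delta,\epsilon) := \int_{\e^{-r}}^{\e^r} a^{\delta}(\cosh r - \cosh\log a)^{\epsilon}\,\dd a
\]
by the change of variables $a = \e^{t}$, $t\in[-r,r]$, which turns it into
\[
I(\delta,\epsilon) = \int_{-r}^{r} \e^{(\delta+1)t}(\cosh r - \cosh t)^{\epsilon}\,\dd t .
\]
Now $\cosh r - \cosh t = \half(\e^{r}-\e^{t})(1-\e^{-r-t}) = \half \e^{r}(1-\e^{t-r})(1-\e^{-r-t})$ for $t\in[-r,r]$; each of the two last factors lies in $[0,1]$ and is bounded below by a universal constant on, say, $t\in[-r+1,r-1]$, and near the endpoints one factor vanishes linearly. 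So up to the harmless constant factor $(\half\e^{r})^{\epsilon}$ one reduces to understanding $\int_{-r}^{r}\e^{(\delta+1)t}g(t)^{\epsilon}\,\dd t$ where $g(t) = (1-\e^{t-r})(1-\e^{-r-t})$ behaves like $\min(r-|t|,1)$ near $\pm r$ and like a constant in the bulk. This should be the main computational point, but it is elementary.

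Next I would localize the integral to the endpoint that dominates, which depends on the sign of $\delta+1$. By symmetry it suffices to treat the integral over, say, $[0,r]$ with a factor $\e^{(\delta+1)t}$; the integral over $[-r,0]$ is the same with $\delta+1$ replaced by $-(\delta+1)$, so $I \approx (\half\e^r)^\epsilon(J_+ + J_-)$ where $J_{\pm} = \int_0^{r}\e^{\pm|\delta+1|t}g(t)^{\epsilon}\,\dd t$ and one of the two always dominates. For $J_+$ (the case where the exponential is increasing, so mass concentrates near $t=r$): split $[0,r] = [0,r-1]\cup[r-1,r]$; on $[0,r-1]$, $g\approx 1$ and $\int_0^{r-1}\e^{|\delta+1|t}\dd t \approx \e^{|\delta+1|(r-1)}\approx \e^{|\delta+1|r}$ (for $\delta+1\neq 0$; the case $\delta+1=0$ gives a factor $r$, still $\approx \e^{|\delta+1|r}$ up to constants absorbed in $\approx$ — wait, a polynomial factor is \emph{not} absorbed by $\approx$, so this edge case needs the observation that $|\delta+1|=0$ is excluded precisely when... actually it is not excluded; I would instead note that on $[r-2,r-1]$ one has $g\approx 1$ and $\e^{|\delta+1|t}\ge 1$ giving the lower bound $J_+\gtrsim 1$, and combine — more cleanly, the endpoint piece below already gives the right lower bound $\gtrsim \e^{|\delta+1|r}$ even when $\delta+1=0$). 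On $[r-1,r]$, substitute $t = r-s$, $s\in[0,1]$, so $g(t)^\epsilon \approx s^\epsilon$ and $\e^{|\delta+1|t}\approx \e^{|\delta+1|r}$ uniformly, giving $\int_{r-1}^{r}\e^{|\delta+1|t}g(t)^\epsilon\,\dd t \approx \e^{|\delta+1|r}\int_0^1 s^{\epsilon}\,\dd s \approx \e^{|\delta+1|r}$, using $\epsilon > -1$ (here $\epsilon>0$) for convergence. For $J_-$ the exponential is decreasing, mass sits near $t=0$ where $g\approx 1$, and $\int_0^{r}\e^{-|\delta+1|t}\dd t$ is bounded above and below by universal constants (when $\delta+1\neq 0$) or equals $r$ (when $\delta+1=0$); in all cases $J_- \lesssim J_+$. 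Hence $I \approx \e^{\epsilon r}\cdot \e^{|\delta+1|r} = \e^{r(|\delta+1|+\epsilon)}$, as claimed.

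The only genuine subtlety is bookkeeping the constants implicit in $\approx$ so that they depend on $\delta$ and $\epsilon$ but \emph{not} on $r$, and handling the degenerate exponent $\delta+1=0$: there the bulk integral contributes $r$, which is not comparable to a constant, so one must be sure the endpoint contribution $\gtrsim \e^{|\delta+1|r}=1$ together with the correct upper bound (the bulk piece is then $\lesssim r$, which one must \emph{avoid} claiming as a lower-order term — but since we only want the two-sided bound up to $r$-independent constants, and $r \not\lesssim 1$, this case in fact requires a small separate argument). Actually the statement as phrased with $\approx$ meaning ``ratio bounded above and below by $r$-independent constants'' does hold in the degenerate case too, because there $J_+ = \int_0^r g(t)^\epsilon\,\dd t$, and one checks directly $\int_0^r g^\epsilon \approx 1$: the bulk $\int_0^{r-1} g^\epsilon \approx r-1$ is \emph{not} bounded — so in fact the degenerate case $\delta = -1$ must be \emph{excluded}, or the right-hand side should read $\e^{r(|\delta+1|+\epsilon)}$ times a factor that is polynomial when $\delta=-1$. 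I expect the paper intends $\approx$ with possibly $r$-dependent polynomial corrections swept under the rug, or simply that in its applications $\delta+1\neq 0$; I would state the lemma's proof assuming $\delta \ne -1$ (which is the relevant range, since there $\delta = \gamma-1+\tfrac{p+n-1}{2}$ or $\delta = \gamma-1+p+\tfrac{n-1}{2}$ with $p\ge 1$, $n\ge 2$) and remark that the degenerate case introduces at most a polynomial factor. This is the step to be careful about; everything else is the routine endpoint analysis sketched above.
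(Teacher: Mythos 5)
Your proof is correct and follows essentially the same route as the paper: the substitution $a=\e^t$, the two-sided bounds $\cosh r-\cosh t\approx \e^r$ for $|t|<r-1$ and $\cosh r-\cosh t\approx(r-|t|)\e^r$ for $r-1<|t|<r$, and the resulting bulk/endpoint integration using $\epsilon>0$. Your extra observation about the degenerate case is apt: when $\delta=-1$ the bulk contributes a factor $r$, so the equivalence with $r$-independent constants genuinely fails there, a point the paper's own proof (whose step $\int_{-r+1}^{r-1}\e^{t(\delta+1)}\,\dd t\approx \e^{|\delta+1|r}$ breaks down for $\delta=-1$) passes over silently.
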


\begin{proof}
We first make a change of variables
\[
\int_{\e^{-r}}^{\e^r} a^{\delta}{(\cosh r - \cosh \log a)^{\epsilon}} \, \dd a = \int_{-r}^r \e^{t(\delta +1)} (\cosh r - \cosh t)^{\epsilon} \, \dd t.
\]
Since $\cosh r - \cosh t \approx \e^r$ if $ |t|<r-1$, while $\cosh r - \cosh t \approx (r-|t|)\e^r$ if $ r-1<|t|<r$, we get
\[
\int_{-r+1}^{r-1} \e^{t(\delta +1)} (\cosh r - \cosh t)^{\epsilon} \, \dd t\approx \e^{\epsilon r} \int_{-r+1}^{r-1} \e^{t(\delta +1)}\, \dd t \approx \e^{(\epsilon + |\delta+1|) r} 
\]
while
\[
\int_{r-1<|t|<r} \e^{t(\delta +1)} (\cosh r - \cosh t)^{\epsilon} \, \dd t\approx \e^{(\epsilon +|\delta +1|) r} \int_{r-1<|t|<r}  (r-|t|)^\epsilon\, \dd t \approx \e^{(\epsilon + |\delta+1|) r} ,
\]
as required.
\end{proof}
From the lemma above, we get that
\[
\int_{B_r} |\phi|^p \, \dd \mu_\gamma\approx \e^{(  {|\gamma+ \frac{p+n-1}{2}|} + \frac{p+n-1}{2})r},
\]
while
\[
\int_{B_r} |\nabla \phi|^p \, \dd \mu_\gamma  \approx \e^{(  {|\gamma+p+ \frac{n-1}{2}| }+ \frac{n-1}{2})r}.
\]
We observe that, if $\gamma< {-\frac{p+n-1}{2}}$, then
\[
{\bigg|\gamma + \frac{p+n-1}{2}\bigg|} + \frac{p+n-1}{2} >  { \bigg|\gamma+p+ \frac{n-1}{2} \bigg|} + \frac{n-1}{2}.
\]
Thus for such $\gamma$
\[
C(r,p) \geq C  \e^{ r( {\left|\gamma + \frac{p+n-1}{2}\right | }+ \frac{p}{2}      -   { \left |\gamma +p+ \frac{n-1}{2} \right|}     )}.
\]
If in particular $\gamma= -n+1$, hence $\mu_\gamma$ is the left measure, and $n>p+1  $, then 
\[
 C(r,p) { \geq \e^{ (\left|   \frac{-n+p+1}{2}   \right|-  \left| \frac{-n+1}{2}+p  \right| +\frac{p}{2})r } =
 \begin{cases}
 \e^{pr}  &  n \geq 2p+1\\
 \e^{(n-p-1)r} & p+1 <n\leq 2p+1.
 \end{cases}
  }
\]

 \section{Nonlocal Poincaré inequality}\label{sec:5}
 In this second part of the paper we prove a nonlocal $L^2$-Poincar\'e
inequality for suitable finite measures on $G$ in the spirit of
\cite{MRS, RussSire}. More precisely, let $M$ be a positive function in
$L^1(\mu_\chi)$ and $\mu_{\chi,M}$ be the finite measure whose density
is $M$ with respect to $\mu_\chi$. We shall prove $L^2$-global Poincaré
inequalities for the measure $\mu_{\chi,M}$ for a large family of functions $M$. In order to do this, we let  
\[
L^2_1(\mu_{\chi,M}) = \{ f\in L^2(\mu_{\chi,M}) \colon |\nabla f| \in L^2(\mu_{\chi,M}) \}
\]
and introduce the operator
\begin{equation}\label{DeltaMchi}
\Delta_{\chi,M} = \Delta_\chi - \nabla(\log M) \cdot \nabla,
\end{equation}
where $\Delta_\chi$ is that of~\eqref{Deltachi}, $\Dom(\Delta_{\chi,M}) = \left\{f \in L^2_1(\mu_{\chi,M})\colon \Delta_{\chi,M} f \in L^2(\mu_{\chi,M}) \right\}$ and the derivatives are meant in the distributional sense. Observe that $\Delta_{\chi,M}$ is symmetric on $L^2(\mu_{\chi,M})$; in particular, for all $f\in \Dom(\Delta_{\chi,M})$ and $g\in L^2_1(\mu_{\chi,M})$,
\[
\int_{G} \nabla f \cdot \nabla g\, \dd\mu_{\chi,M}=\int_{G} \Delta_{\chi,M} f \cdot g\, \dd\mu_{\chi,M},
\] 
where $\nabla f \cdot \nabla g = \sum_{j=1}^\ell (X_j f)(X_j g)$. 

We say that the couple $(\Delta_\chi,M)$ admits a Lyapunov function if there exist a $C^2$ function $W\colon G\rightarrow [1,\infty)$ and constants $\theta>0$, $b\geq 0$, $R>0$ such that
\begin{equation} \label{lyap}
- \Delta_{\chi,M} W(x)\leq -\theta W(x)+b{\bf 1}_{B_R}(x) \qquad \forall x\in G.
\end{equation}
Observe that the existence of a Lyapunov function depends on $G$, $\mathbf{X}$, $\chi$ and $M$. For $f\in L^2_1(\mu_{\chi,M})$ we let
 \[
f_{\chi,M} = \frac{1}{\mu_{\chi,M}(G)}\int_G f\, \dd\mu_{\chi,M}.
\] 
Our  second main result  is the following global $L^2$-Poincar\'e inequality for  $\mu_{\chi,M}$.
\begin{theorem} \label{teo:GP}
If $(\Delta_\chi,M)$ admits a Lyapunov function, then there exists a constant $C=C(G, \mathbf{X}, \chi, M)$ such that for all $f\in L^2_1(\mu_{\chi,M})$
\begin{equation} \label{eqpoincmu}  
\| f- f_{\chi,M}\|_{L^2(\mu_{\chi,M})} \leq C \| |\nabla f|\|_{L^2(\mu_{\chi,M})}.
\end{equation}
\end{theorem}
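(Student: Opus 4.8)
The plan is to follow the by-now standard Lyapunov-function route to Poincaré inequalities, as in \cite{BBCG,RussSire}, adapted to our drifted setting. The starting point is the local Poincaré inequality of Theorem~\ref{teo:LPBB}: since $(G,d_C,\mu_\chi)$ is locally doubling, Theorem~\ref{teo:LPBB} applied with $R$ equal to the radius of the Lyapunov ball $B_R$ yields a constant $C_P=C_P(G,\mathbf X,\chi,R)$ such that
\[
\int_{B(x_0,R)} |f-f_{B(x_0,R)}^\chi|^2\,\dd\mu_\chi \le C_P\, R^2 \int_{B(x_0,2R)} |\nabla f|^2\,\dd\mu_\chi
\]
for every $x_0$; by left-invariance of $\mathbf X$ and $d_C$ one may in fact center at a fixed point, say $e$, after noting $\mu_\chi$ is not left-invariant but the doubling constant $D(2R,\chi)$ controls the distortion. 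Since $M\in L^1(\mu_\chi)$ is locally bounded above and below by positive constants on the fixed compact $\overline{B_{2R}}$ (here one should record that a Lyapunov function forces $M$, via $\nabla\log M$, to be regular enough; in any case $M$ is continuous and positive), the local Poincaré inequality for $\mu_\chi$ transfers to one for $\mu_{\chi,M}$ on $B_R$: there is $C_0=C_0(G,\mathbf X,\chi,M)$ with
\[
\int_{B_R} |f-a|^2\,\dd\mu_{\chi,M} \le C_0 \int_{B_{2R}} |\nabla f|^2\,\dd\mu_{\chi,M}
\]
for a suitable constant $a$ (e.g. the $\mu_{\chi,M}$-average over $B_R$), and hence also with $2B_R$ on both sides up to enlarging $C_0$.

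The core of the argument is the Lyapunov inequality \eqref{lyap}. First I would reduce to $f$ smooth and compactly supported (or in a form domain dense enough), and assume $f_{\chi,M}=0$, so the goal is $\|f\|_{L^2(\mu_{\chi,M})}^2 \le C\|\,|\nabla f|\,\|_{L^2(\mu_{\chi,M})}^2$. Using $W\ge 1$, write
\[
\int_G f^2\,\dd\mu_{\chi,M} \le \int_G f^2\,\frac{-\Delta_{\chi,M}W}{\theta W}\,\dd\mu_{\chi,M} + \frac{b}{\theta}\int_{B_R} \frac{f^2}{W}\,\dd\mu_{\chi,M} \le \int_G f^2\,\frac{-\Delta_{\chi,M}W}{\theta W}\,\dd\mu_{\chi,M} + \frac{b}{\theta}\int_{B_R} f^2\,\dd\mu_{\chi,M}.
\]
For the first term, the key computation is the integration-by-parts identity, valid because $\Delta_{\chi,M}$ is symmetric on $L^2(\mu_{\chi,M})$ with $\nabla f\cdot\nabla g$ as its form: testing $\Delta_{\chi,M}W$ against $f^2/W$ gives
\[
\int_G \frac{f^2}{W}(-\Delta_{\chi,M}W)\,\dd\mu_{\chi,M} = -\int_G \nabla W\cdot\nabla\!\Big(\frac{f^2}{W}\Big)\dd\mu_{\chi,M} = \int_G \Big|\frac{f}{W}\nabla W - \nabla f\Big|^2 \dd\mu_{\chi,M} - \int_G |\nabla f|^2\,\dd\mu_{\chi,M},
\]
after completing the square (the cross terms reorganize as $-2\frac{f}{W}\nabla f\cdot\nabla W + \frac{f^2}{W^2}|\nabla W|^2 + |\nabla f|^2 - |\nabla f|^2$). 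Dropping the nonnegative square, the first term is $\le \theta^{-1}\int_G|\nabla f|^2\,\dd\mu_{\chi,M}$.

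It remains to absorb the local term $\frac{b}{\theta}\int_{B_R} f^2\,\dd\mu_{\chi,M}$. Here I would use the local Poincaré inequality on $2B_R$ in the form above: write $f = (f-a) + a$ on $B_R$, so $\int_{B_R} f^2 \le 2\int_{B_R}(f-a)^2 + 2a^2\mu_{\chi,M}(B_R) \le 2C_0\int_{B_{2R}}|\nabla f|^2\,\dd\mu_{\chi,M} + 2a^2\mu_{\chi,M}(B_R)$, with $a = \mu_{\chi,M}(B_R)^{-1}\int_{B_R} f\,\dd\mu_{\chi,M}$. By Cauchy--Schwarz, $a^2\mu_{\chi,M}(B_R) \le \int_{B_R} f^2\,\dd\mu_{\chi,M}$, which is circular; the standard fix is instead to estimate $a^2$ by comparing to the global mean, which is $0$: $|a| = |a - f_{\chi,M}| = \mu_{\chi,M}(G)^{-1}\big|\int_G (a-f)\,\dd\mu_{\chi,M} + \int_{B_R}(f-a)\dots\big|$ — more cleanly, $|a|\,\mu_{\chi,M}(B_R) = |\int_{B_R} f\,\dd\mu_{\chi,M}| = |\int_{B_R} f - \frac{\mu_{\chi,M}(B_R)}{\mu_{\chi,M}(G)}\int_G f|$ using $f_{\chi,M}=0$, and then splitting $\int_G f = \int_{B_R} f + \int_{G\setminus B_R} f$ and applying Cauchy--Schwarz on the full space bounds $a^2\mu_{\chi,M}(B_R)$ by a constant (depending on $\mu_{\chi,M}(G)$, $\mu_{\chi,M}(B_R)$) times $\int_G f^2\,\dd\mu_{\chi,M}$, but with a constant that is \emph{not} less than $1$, so still circular. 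The actual resolution, which I expect to be the main obstacle and the place requiring care, is the classical observation that it suffices to prove the Poincaré inequality for $f$ with $\int_{B_R} f\,\dd\mu_{\chi,M}=0$ (not $f_{\chi,M}=0$): indeed $\mathrm{Var}_{\mu_{\chi,M}}(f) = \inf_c\|f-c\|^2 \le \|f - a\|^2$ where $a$ is the $B_R$-mean, and replacing $f$ by $f-a$ changes neither $|\nabla f|$ nor the left side's infimum. With $\int_{B_R}f\,\dd\mu_{\chi,M}=0$ we get directly $\int_{B_R} f^2\,\dd\mu_{\chi,M} \le C_0\int_{B_{2R}}|\nabla f|^2\,\dd\mu_{\chi,M}$ with no leftover term. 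Combining, $\int_G f^2\,\dd\mu_{\chi,M} \le (\theta^{-1} + \frac{b}{\theta}C_0)\int_G |\nabla f|^2\,\dd\mu_{\chi,M}$, giving \eqref{eqpoincmu} with $C = (\theta^{-1} + b\theta^{-1}C_0)^{1/2}$. Finally I would remove the smoothness/compact-support reduction by density of $C_c^\infty(G)$ (or smooth functions) in $L^2_1(\mu_{\chi,M})$, noting all quantities are continuous under the relevant limits; the justification of the integration by parts for general $f\in\Dom(\Delta_{\chi,M})$ and the test function $f^2/W\in L^2_1(\mu_{\chi,M})$ (using $W\ge 1$ and $|\nabla\log W|$ controlled via \eqref{lyap}) is the remaining technical point to nail down.
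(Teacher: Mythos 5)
Your argument is essentially the paper's proof: the same splitting of $\int_G f^2\,\dd\mu_{\chi,M}$ via the Lyapunov inequality, the same integration-by-parts/complete-the-square bound for the $W$-term, and the same resolution of the local term by replacing the global mean with the mean over $B_R$ (justified by the variational characterization of the variance) and then invoking the local inequality of Section~\ref{seclocal} together with the boundedness of $M$ above and below on the fixed ball; your closing density/truncation caveat corresponds to the paper's cutoff argument with $\psi_n$ and monotone/dominated convergence. The only slip is a sign: \eqref{lyap} gives $1\le\frac{\Delta_{\chi,M}W}{\theta W}+\frac{b\,\mathbf{1}_{B_R}}{\theta W}$, so the first term must carry $+\Delta_{\chi,M}W$, and the needed bound $\le\theta^{-1}\int_G|\nabla f|^2\,\dd\mu_{\chi,M}$ follows from the identity $\int_G\frac{f^2}{W}\,\Delta_{\chi,M}W\,\dd\mu_{\chi,M}=\int_G|\nabla f|^2\,\dd\mu_{\chi,M}-\int_G\bigl|\nabla f-\tfrac{f}{W}\nabla W\bigr|^2\,\dd\mu_{\chi,M}$ (the negative of the one you wrote); with your sign as written, dropping the nonnegative square goes the wrong way.
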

Theorem \ref{teo:GP} is a generalization to any connnected noncompact possibly nonunimodular Lie group of the results proved in \cite{MRS} in the Euclidean setting and in~\cite{RussSire} in unimodular Lie groups of polynomial growth, by which our proof is inspired.  More general versions of nonlocal Poincar\'e inequalities of this kind were proved in \cite{Gr} in the setting of a topological measure space endowed with a family of sets which play the role of unit balls and satisfy suitable assumptions. Recently in \cite{CFZ} nonlocal $L^p$-Poincar\'e inequalities were obtained on Carnot groups of Engel type in the  case when the density of the measure depends on a homogeneous norm of the group; we note however that the case $p=2$ is always excluded.

\begin{proof}
Let $f\in L^2_1(\mu_{\chi,M})$, and observe first that
\begin{equation}\label{inf}
\int_G \left| f- f_{\chi,M} \right|^2 \dd \mu_{\chi,M} = \min_{c\in \R} \int_G \left| f- c\right|^2 \dd \mu_{\chi,M}.
\end{equation}
Let now $g=f-c$ for a positive $c$ to be determined, and $W$ be a Lyapunov function for $(\Delta_\chi,M)$. By~\eqref{lyap} 
\begin{equation} \label{splitting}
\int_G |g|^2 \, \dd\mu_{\chi,M} \leq \int_G |g|^2 \frac{\Delta_{\chi,M} W}{\theta W}\, \dd \mu_{\chi,M} +\int_{B_R} |g|^2 \frac{b}{\theta W}\, \dd\mu_{\chi,M}.
\end{equation}
We treat the two terms separately.

Let us consider the first term, and prove that
\begin{equation}\label{firsterm}
\int_G \frac{\Delta_{\chi,M} W}{W}g^2 \dd \mu_{\chi,M} \leq \int_G |\nabla g |^2\dd\mu_{\chi,M}. 
\end{equation}
We prove it by density, and first assume that $g$ is compactly supported.  By definition of $\Delta_{\chi,M}$,
\begin{align*}
\int_G \frac{\Delta_{\chi,M} W}{W} g^2 \,\dd \mu_{\chi,M}
&= \int_G \nabla \left(\frac{g^2}W\right)\cdot \nabla W\, \dd\mu_{\chi,M} \\
& =   2 \int_G \frac{g}{W} \nabla g\cdot \nabla W\, \dd\mu_{\chi,M} -\int_G \frac{g^2}{W^2} |\nabla W|^2 \, \dd\mu_{\chi,M}\\
& = \int_G |\nabla g|^2\, \dd\mu_{\chi,M}  - \int_G |\nabla g-\frac{g}{W}  \nabla W|^2 \, \dd\mu_{\chi,M} \\
& \leq  \int_G |\nabla g|^2\dd\mu_{\chi,M}.
\end{align*}
Let now $g\in L^2_1(\mu_{\chi,M})$,  and consider a nondecreasing sequence of functions $\psi_n\in C_c^\infty(G)$ such that
\[
{\bf 1}_{B_{nR}} \leq \psi_n\leq 1, \qquad |\nabla \psi_n| \leq 1.
\]
By applying ~\eqref{firsterm} to $g\psi_n$, the monotone convergence theorem in the left-hand side and the dominated convergence theorem in the right-hand side, one gets~\eqref{firsterm}.

To deal with the second term, we choose $c$ such that $\int_{B_R}g\, \dd\mu_\chi=0$. By~\eqref{LP} applied to $g$ on $B_R$, and the fact that $M$ is bounded from above and below on $B_R$, one has
\begin{align*}
\int_{B_R} |g|^2 \, \dd\mu_{\chi,M} \leq C \int_{B_R} |g|^2 \, \dd\mu_\chi \leq C \int_{B_{2R}} |\nabla g|^2\, \dd\mu_\chi \leq C \int_{B_{2R}} |\nabla g|^2\, \dd\mu_{\chi,M}
\end{align*}
where the constant $C$ depends on $R$ and $M$.  Therefore, since $W\geq 1$,
\begin{equation*}
\int_{B_R} |g|^2 \frac{b}{\theta W}\, \dd\mu_{\chi,M} \leq C \int_{B_{2R}} |\nabla g|^2\, \dd\mu_{\chi,M} \leq C \int_{G} |\nabla g|^2\, \dd\mu_{\chi,M} ,
\end{equation*}
which completes the proof.
\end{proof}

\begin{corollary} \label{corLyapunov}
Let $v= -\log M$. If there exist $a\in (0,1)$, $c>0$ and $R>0$ such that
\begin{equation}\label{suffLyap}
a|\nabla v|^2(x) + \Delta_\chi v(x)\geq c \qquad \forall \, x \in B_R^c,
\end{equation}
then $(\Delta_\chi,M)$ admits a Lyapunov function, and~\eqref{eqpoincmu} holds.
\end{corollary}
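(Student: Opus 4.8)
The plan is to produce an explicit Lyapunov function of the form $W = \e^{\beta v}$ for a suitably small exponent $\beta \in (0,1)$, and then verify the differential inequality~\eqref{lyap} using the hypothesis~\eqref{suffLyap}. Since $v = -\log M$ is $C^2$ (as $M$ is a positive $C^2$ density — or at least we may assume enough regularity), $W = \e^{\beta v}$ is $C^2$ and positive; but to land in $[1,\infty)$ as required we may need to replace $W$ by $\max(W, 1)$ smoothed, or simply observe that adding a constant and rescaling does not affect the Lyapunov inequality up to enlarging $b$ and $R$. The natural choice $\beta = a$ will turn out to be essentially forced by the algebra below.

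First I would compute $\Delta_{\chi,M} W$ for $W = \e^{\beta v}$. Recall $\Delta_{\chi,M} = \Delta_\chi - \nabla(\log M)\cdot\nabla = \Delta_\chi + \nabla v \cdot \nabla$. Using the product/chain rules for the second-order operator $\Delta_\chi = -\sum_j (X_j^2 + (X_j\chi)(e) X_j)$ applied to $\e^{\beta v}$, one gets
\[
\Delta_\chi(\e^{\beta v}) = \e^{\beta v}\big(\beta \Delta_\chi v - \beta^2 |\nabla v|^2\big),
\]
since the first-order drift terms and the $X_j^2$ terms combine so that the quadratic piece carries the sign $-\beta^2|\nabla v|^2$ (the cross term from differentiating twice). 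Meanwhile $\nabla v \cdot \nabla(\e^{\beta v}) = \beta \e^{\beta v}|\nabla v|^2$. Adding,
\[
\Delta_{\chi,M} W = \e^{\beta v}\big(\beta \Delta_\chi v + \beta(1-\beta)|\nabla v|^2\big) = \beta W\big(\Delta_\chi v + (1-\beta)|\nabla v|^2\big).
\]
Now I would choose $\beta = 1 - a \in (0,1)$, so that $(1-\beta) = a$ and the bracket becomes exactly $\Delta_\chi v + a|\nabla v|^2$, which by~\eqref{suffLyap} is $\geq c$ on $B_R^c$. Hence $\Delta_{\chi,M} W \geq \beta c\, W$ on $B_R^c$, i.e. $-\Delta_{\chi,M} W \leq -\theta W$ there with $\theta = \beta c = (1-a)c > 0$. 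On the compact ball $B_R$ the quantity $-\Delta_{\chi,M}W + \theta W$ is continuous hence bounded, say by $b \geq 0$, so $-\Delta_{\chi,M} W \leq -\theta W + b\,\mathbf{1}_{B_R}$ everywhere. To ensure $W \geq 1$ one replaces $W$ by $W + 1$: then $-\Delta_{\chi,M}(W+1) = -\Delta_{\chi,M}W \leq -\theta W + b\mathbf 1_{B_R} = -\theta(W+1) + \theta + b\mathbf 1_{B_R}$, and outside $B_R$ we absorb the extra $+\theta$ by shrinking $\theta$ slightly (possible since $\Delta_\chi v + a|\nabla v|^2 \ge c$ gives a strict positive lower bound for $\Delta_{\chi,M}W$ on $B_R^c$, so for any $\theta' < \theta$ we still have $-\Delta_{\chi,M}(W+1) \le -\theta'(W+1)$ on $B_R^c$ once $|B_R^c$-values of $W$ are bounded below, which they are since $W\ge 0$; more cleanly, enlarge $R$ so that outside it $\Delta_\chi v + a|\nabla v|^2 \ge c$ and $W$ is as small as needed, or simply note $-\theta W + \theta \le -\theta'(W+1)$ for $\theta' = \theta/2$ whenever $W \ge \text{const}$, handling the bounded exceptional region by enlarging $b$). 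Either way $(\Delta_\chi, M)$ admits a Lyapunov function, and~\eqref{eqpoincmu} follows from Theorem~\ref{teo:GP}.

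The main obstacle — really the only point requiring care — is the bookkeeping to force the codomain condition $W\colon G \to [1,\infty)$ and the strict dissipativity $-\theta W$ (rather than $-\theta W + (\text{small})$) simultaneously after the shift $W \mapsto W+1$; this is a soft argument (continuity on the compact $B_R$, strict sign of the bracket on $B_R^c$, freedom to shrink $\theta$ and enlarge $b, R$) but it is where a careless write-up could slip. The differential-identity computation $\Delta_{\chi,M}(\e^{\beta v}) = \beta \e^{\beta v}(\Delta_\chi v + (1-\beta)|\nabla v|^2)$ is routine once one is careful that $\Delta_\chi$ has the drift term $(X_j\chi)(e)X_j$, but that term is \emph{linear} in the vector fields so it contributes only $\beta \e^{\beta v}\sum_j (X_j\chi)(e)(-X_j v)$, which is exactly the drift part of $-\beta\e^{\beta v}\Delta_\chi v$ repackaged — so it folds harmlessly into the $\Delta_\chi v$ term and no new terms appear. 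I would present the computation in one display, then the choice $\beta = 1-a$, then the two-line compactness argument on $B_R$, then invoke Theorem~\ref{teo:GP}.
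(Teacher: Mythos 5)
Your Lyapunov candidate and the key identity are exactly the paper's: the paper takes $W=\e^{(1-a)(v-\inf_G v)}$ and uses precisely $-\Delta_{\chi,M}W=(1-a)W\left(-\Delta_\chi v-a|\nabla v|^2\right)$, then sets $\theta=(1-a)c$ and $b=\max_{B_R}(-\Delta_{\chi,M}W+\theta W)$, concluding via Theorem~\ref{teo:GP}. So the substance of your computation (including the observation that the drift term folds into $\Delta_\chi v$) is correct and identical in spirit; the only divergence is how the codomain condition $W\geq 1$ in \eqref{lyap} is arranged, and that is where your write-up has a real flaw.

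After replacing $W=\e^{(1-a)v}$ by $W+1$, your absorption on $B_R^c$ needs $-\theta W+\theta\leq-\theta'(W+1)$, i.e. $W+1\geq\theta/(\theta-\theta')$, which requires $W$ to be bounded below by a \emph{positive} constant outside $B_R$; the parenthetical justification ``bounded below, which they are since $W\geq 0$'' does not give this, and the fallback fails as well: the exceptional set $\{W<\mathrm{const}\}\setminus B_R=\{v<\mathrm{const}'\}\setminus B_R$ is determined by where $M$ is large, not by distance from the identity, so it need not be bounded and cannot be swallowed into $b\,\mathbf{1}_{B_{R'}}$ by enlarging $R$. The clean repair is the paper's normalization: use $W=\e^{(1-a)(v-\inf_G v)}$, which is automatically $\geq 1$, satisfies the same identity (multiplying by a constant changes nothing), and requires no shrinking of $\theta$ nor any shift by $1$. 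Of course this uses $\inf_G v>-\infty$; that assumption is implicit in the paper's proof too, and it is exactly what your absorption argument would also need, so with it your proof reduces to the paper's, while without it your argument as stated does not close.
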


\begin{proof}
Let $W(x)= \e^{(1-a)\left(v(x)-\inf_Gv\right)}$, so that
\[
-\Delta_{\chi,M} W =(1-a)W \left(- \Delta_\chi v-a|\nabla v|^2\right).
\]
Then $W$ is a Lyapunov function with $\theta=c (1-a)$ and $b= \max_{B_R} (-\Delta_{\chi,M} W+\theta W)$.
\end{proof}

One can actually show that if~\eqref{suffLyap} holds with $a<1/2$, then~\eqref{eqpoincmu} self-improves as follows.
\begin{proposition} \label{selfimprGP} 
Let $v= -\log M$.  If there exist $c>0$, $R>0$ and $\epsilon\in (0,1)$ such that
\begin{equation} \label{selfimprovcond}
\frac{1-\epsilon}{2}|\nabla v|^2(x) + \Delta_\chi v(x)\geq c \qquad \forall \, x\in B_R^c,
\end{equation}
then there exists $C>0$ such that for all $f\in L^2_1(\mu_{\chi,M})$
\begin{equation} \label{pim} 
\| |f - f_{\chi,M}|\left(1+| \nabla v|\right)\|_{L^2(\mu_{\chi,M})} \leq C \| |\nabla f|\|_{L^2(\mu_{\chi,M})}.
\end{equation}
\end{proposition}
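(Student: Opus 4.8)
The plan is to strengthen the Lyapunov inequality underlying Theorem~\ref{teo:GP} so that it controls not just $\|g\|_{L^2(\mu_{\chi,M})}$ but the weighted quantity $\||g|(1+|\nabla v|)\|_{L^2(\mu_{\chi,M})}$, and then to re-run the proof of that theorem with this stronger input. Writing $a=\tfrac{1-\epsilon}{2}\in(0,\tfrac12)$, hypothesis~\eqref{selfimprovcond} is exactly~\eqref{suffLyap}, so $\Delta_\chi v+\tfrac{1-\epsilon}{2}|\nabla v|^2\ge c$ on $B_R^c$. I would fix some $a'\in(\tfrac{1-\epsilon}{2},1)$ (e.g.\ $a'=\tfrac12$, which is admissible precisely because $\epsilon>0$) and set $W=\e^{(1-a')(v-\inf_G v)}$, which is $C^2$ and $\ge1$. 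By the computation in the proof of Corollary~\ref{corLyapunov}, $-\Delta_{\chi,M}W=(1-a')W(-\Delta_\chi v-a'|\nabla v|^2)$, and on $B_R^c$
\[
-\Delta_\chi v-a'|\nabla v|^2=-\bigl(\Delta_\chi v+\tfrac{1-\epsilon}{2}|\nabla v|^2\bigr)-\bigl(a'-\tfrac{1-\epsilon}{2}\bigr)|\nabla v|^2\le -c-\bigl(a'-\tfrac{1-\epsilon}{2}\bigr)|\nabla v|^2 .
\]
Since $a'-\tfrac{1-\epsilon}{2}>0$, taking $\theta=(1-a')\min\{c,\,a'-\tfrac{1-\epsilon}{2}\}>0$ gives $-\Delta_{\chi,M}W\le-\theta W(1+|\nabla v|^2)$ on $B_R^c$; as $W$ and $|\nabla v|$ are bounded on the compact set $\overline{B_R}$, the bounded correction there yields, for a suitable $b\ge0$, the self-improved Lyapunov inequality $-\Delta_{\chi,M}W\le-\theta W(1+|\nabla v|^2)+b\,\mathbf{1}_{B_R}$ on $G$.

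With this in hand I would repeat the proof of Theorem~\ref{teo:GP}. Dividing by $W\ge1$ yields $\theta(1+|\nabla v|^2)\le\tfrac{\Delta_{\chi,M}W}{W}+b\,\mathbf{1}_{B_R}$; multiplying by $g^2$, with $g=f-\kappa$ and $\kappa$ a constant to be fixed, and integrating against $\mu_{\chi,M}$ gives
\[
\theta\int_G g^2(1+|\nabla v|^2)\,\dd\mu_{\chi,M}\le\int_G\frac{\Delta_{\chi,M}W}{W}\,g^2\,\dd\mu_{\chi,M}+b\int_{B_R}g^2\,\dd\mu_{\chi,M}.
\]
The first term on the right is $\le\int_G|\nabla g|^2\,\dd\mu_{\chi,M}=\int_G|\nabla f|^2\,\dd\mu_{\chi,M}$ by~\eqref{firsterm}, which holds verbatim for the present $W$ (the density argument is unchanged; one splits $\tfrac{\Delta_{\chi,M}W}{W}=\bigl(\tfrac{\Delta_{\chi,M}W}{W}+b\,\mathbf{1}_{B_R}\bigr)-b\,\mathbf{1}_{B_R}$ so that monotone convergence applies to the nonnegative first piece). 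For the second term I would choose $\kappa$ with $\int_{B_R}g\,\dd\mu_\chi=0$; then, exactly as in Theorem~\ref{teo:GP}, \eqref{LP} applied to $g$ on $B_R$ and the boundedness of $M$ on $B_R$ give $\int_{B_R}g^2\,\dd\mu_{\chi,M}\le C\int_{B_{2R}}|\nabla g|^2\,\dd\mu_{\chi,M}\le C\int_G|\nabla f|^2\,\dd\mu_{\chi,M}$. Combining and using $(1+|\nabla v|)^2\le2(1+|\nabla v|^2)$,
\[
\int_G g^2(1+|\nabla v|)^2\,\dd\mu_{\chi,M}\le C\int_G|\nabla f|^2\,\dd\mu_{\chi,M}.
\]

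It remains to replace $g=f-\kappa$ by $f-f_{\chi,M}$. Taking $g\equiv1$ in~\eqref{firsterm} (legitimate since $1\in L^2_1(\mu_{\chi,M})$, $\mu_{\chi,M}$ being finite) together with the self-improved inequality shows $\theta\int_G(1+|\nabla v|^2)\,\dd\mu_{\chi,M}\le b\,\mu_{\chi,M}(B_R)<\infty$, hence $1+|\nabla v|\in L^2(\mu_{\chi,M})$. Then from $f-f_{\chi,M}=(f-\kappa)+(\kappa-f_{\chi,M})$ I get
\[
\bigl\||f-f_{\chi,M}|(1+|\nabla v|)\bigr\|_{L^2(\mu_{\chi,M})}\le\bigl\||f-\kappa|(1+|\nabla v|)\bigr\|_{L^2(\mu_{\chi,M})}+|\kappa-f_{\chi,M}|\,\|1+|\nabla v|\|_{L^2(\mu_{\chi,M})},
\]
where the first summand is $\le C\||\nabla f|\|_{L^2(\mu_{\chi,M})}$ by the previous display, and for the second, minimality~\eqref{inf} and Theorem~\ref{teo:GP} give $\|f-f_{\chi,M}\|_{L^2(\mu_{\chi,M})}\le\|f-\kappa\|_{L^2(\mu_{\chi,M})}\le\bigl\||f-\kappa|(1+|\nabla v|)\bigr\|_{L^2(\mu_{\chi,M})}\le C\||\nabla f|\|_{L^2(\mu_{\chi,M})}$, so that $|\kappa-f_{\chi,M}|\sqrt{\mu_{\chi,M}(G)}=\|\kappa-f_{\chi,M}\|_{L^2(\mu_{\chi,M})}\le2\|f-\kappa\|_{L^2(\mu_{\chi,M})}\le C\||\nabla f|\|_{L^2(\mu_{\chi,M})}$. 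This yields~\eqref{pim}.

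The main obstacle is the first step, extracting the extra factor $1+|\nabla v|^2$ from the Lyapunov inequality: this is exactly where the sharpened hypothesis $a<\tfrac12$ (equivalently $\epsilon>0$) is used, since it produces the strictly positive slack $a'-\tfrac{1-\epsilon}{2}$ that allows one to absorb a multiple of $|\nabla v|^2$ into the estimate. Everything else is a routine adaptation of the already-established Theorem~\ref{teo:GP}, so I would not spell out the tedious density and cutoff manipulations in detail.
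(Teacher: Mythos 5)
Your argument is correct (to the same standard of rigor the paper itself adopts: your $W=\e^{(1-a')(v-\inf_G v)}$ presupposes $\inf_G v>-\infty$ exactly as the paper's own Corollary~\ref{corLyapunov} does, and your density/cutoff remarks match the level of detail of \eqref{firsterm}), but it takes a genuinely different route from the paper's proof. The paper does not revisit the Lyapunov machinery at all: it sets $g=f\sqrt{M}$, expands $\int_G|\nabla f|^2\,\dd\mu_{\chi,M}=\int_G\bigl(|\nabla g|^2+\tfrac14 g^2|\nabla v|^2+\tfrac12\nabla(g^2)\cdot\nabla v\bigr)\,\dd\mu_\chi$, integrates the last term by parts, and uses the global pointwise bound $\tfrac{1-\epsilon}{2}|\nabla v|^2+\Delta_\chi v\ge\alpha$ (this is what \eqref{alpha} is meant to say; the printed minus sign is a typo), which follows from \eqref{selfimprovcond} outside $B_R$ and continuity of the $C^2$ function $v$ on $\overline{B_R}$, to get directly $\int_G|\nabla f|^2\,\dd\mu_{\chi,M}\ge\tfrac12\int_G|f|^2\bigl(\tfrac{\epsilon}{2}|\nabla v|^2+\alpha\bigr)\,\dd\mu_{\chi,M}$; applying this to $f-f_{\chi,M}$ and invoking \eqref{eqpoincmu} then yields \eqref{pim}. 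You instead upgrade the Lyapunov inequality itself, spending the hypothesis $\epsilon>0$ as the slack $a'-\tfrac{1-\epsilon}{2}>0$ to obtain $-\Delta_{\chi,M}W\le-\theta W(1+|\nabla v|^2)+b\,\mathbf 1_{B_R}$, rerun the proof of Theorem~\ref{teo:GP} with the weighted left-hand side, and pass from the local mean $\kappa$ to $f_{\chi,M}$ via the correctly justified observation that $1+|\nabla v|\in L^2(\mu_{\chi,M})$ (a nontrivial byproduct your method gives for free). Your route is longer, since it repeats the splitting, \eqref{LP} and density steps of Theorem~\ref{teo:GP}, whereas the paper's is a short ground-state-transform computation leaning on the already proven \eqref{eqpoincmu}; both ultimately use $\epsilon>0$ in the same way, you as the coefficient slack in the exponent of $W$, the paper as the leftover $\tfrac{\epsilon}{4}|\nabla v|^2$ after completing the square.
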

\begin{proof}
Observe first that, since $v$ is $C^2$ and \eqref{selfimprovcond} holds,
\begin{equation} \label{alpha}
\frac{1-\epsilon}2|\nabla v|^2 - \Delta_\chi v \geq \alpha
\end{equation}
for some $\alpha\in \R$. Let $f\in L^2_1(\mu_{\chi,M})$ and let $g=f\sqrt{M}$. Since
\[
\nabla f=\frac{1}{\sqrt{M}} \nabla g+\frac{1}{2}g \frac{1}{\sqrt{M}} \nabla v,
\]
by~\eqref{alpha} 
\begin{align*}
\int_{G}|\nabla f|^2 \, \dd \mu_{\chi,M}
&= \int_{G} \left(|\nabla g|^2  +\frac{1}{4}|g|^2|\nabla v|^2+ g \nabla g\cdot \nabla v\right) \, \dd \mu_\chi\\
&=   \int_{G} \left(|\nabla g|^2  +\frac{1}{4} |g|^2|\nabla v|^2+\frac{1}{2}\nabla(|g|^2)\cdot \nabla v\right) \, \dd \mu_\chi \\
&\geq  \int_{G} |g|^2 \left(\frac{1}{4}| \nabla v|^2+ \frac{1}{2} \Delta_\chi v \right) \, \dd \mu_\chi\\
&\geq {\frac{1}{2} \int_{G} |f|^2 \left( \frac{\epsilon}{2}|\nabla v|^2 + \alpha \right)\, \dd \mu_{\chi,M}.}
\end{align*}
Since~\eqref{eqpoincmu} holds by~\eqref{selfimprovcond} and Corollary~\ref{corLyapunov}, the conclusion follows.
\end{proof}

\end{document}